\newtheorem{theorem}{Theorem}
\newtheorem{lemma}{Lemma}
\newtheorem{remark}{Remark}
\newtheorem{problem}{Inverse Problem}
\theoremstyle{remark}
\date{\today}
\newcommand{\R}{{\bf R}}
\newcommand{\sym}{\hbox{Sym}}
\newcommand{\be}[1]{\begin{equation}\label{#1}} 
\newcommand{\ee}{\end{equation}} 
\newcommand{\N}{\mathbf{N}}
\newcommand{\p}{\partial}
\title[Mixed ray transform]{Mixed ray transform on simple
  $2$-dimensional Riemannian manifolds}
\author[M. V. de Hoop]{Maarten V. de Hoop}
\address{Simons Chair in Computational and
    Applied Mathematics and Earth Science, Rice University, Houston,
    TX 77005, USA (\tt{mdehoop@rice.edu}).}
\author[T. Saksala]{Teemu Saksala}
\address{Department of Computational and Applied Mathematics,
  Rice University, Houston, TX, 77005, USA
   (\tt{teemu.saksala@rice.edu})}
  \author[J. Zhai]{Jian Zhai}
\address{Department of Computational and Applied Mathematics,
  Rice University, Houston, TX, 77005, USA
  (\tt{jian.zhai@rice.edu}).}
\begin{document}

\begin{abstract}
We characterize the kernel of the mixed ray transform on simple
$2$-dimensional Riemannian manifolds, that is, on simple surfaces for
tensors of any order.
\end{abstract}

\maketitle 

\section{Introduction}

We provide a characterization of the kernel of the mixed ray transform
on simple $2$-dimensional Riemannian manifolds for tensors of any
order. The key application pertains to elastic \textit{qS}-wave
tomography \cite{chapman1992traveltime} in weakly anisotropic media.

We let $(M,g)$ be a smooth, compact, connected $2$-dimensional
Riemannian manifold with smooth boundary $\partial M$. We assume that
$(M,g)$ is simple, that is, $\p M$ is strictly convex with respect to
$g$ and $\exp_p:\exp_p^{-1}(M) \to M$ is a diffeomorphism for every $p
\in M$. We let $SM=\{(x,v)\in TM;\|v\|_g=1\}$ be the unit sphere
bundle. We use the notation $\nu$ for the outer unit normal vector
field to $\p M$. We write $\p_{in}(SM)=\{(x,v)\in SM;x\in \partial M,
\langle v,\nu\rangle_g \leq 0\}$ for the vector bundle of inward
pointing unit vectors on $\p M$. For $(x,v)\in SM$, $\gamma_{x,v}(t)$
is the geodesic starting from $x$ in direction $v$, and $\tau(x,v)$ is
the time when $\gamma_{x,v}$ exits $M$. Since $(M,g)$ is simple
$\tau(x,v)< \infty$ for all $(x,v)\in \p_{in}(SM)$ and the
\textit{exit time function} $\tau$ is smooth in $\p_{in}(SM)$
\cite[Section 4.1]{Shara}.

\medskip

We use the notation $S^k M$, $k \in \N,$ for the space of smooth
symmetric tensor fields on $M$. We also use the notation $S^k M \times
S^\ell M , \: k,\ell\geq 1$ for the space of smooth tensor fields that
are symmetric with respect to first $k$ and last
$\ell$ variables. \textit{The mixed ray transform} $L_{k,\ell}$ of a
tensor field $f\in S^k M \times S^\ell M$ is given by the formula
\begin{equation}
\label{eq:mixed_ray_trans}
L_{k,\ell}f(x,v)=\int_{0}^{\tau(x,v)}  f_{i_1,\ldots,i_kj_1,\ldots, j_\ell}(\gamma(t))\dot\gamma(t)^{i_1}\cdots\dot\gamma(t)^{i_k}\eta(t)^{j_1} \cdots\eta(t)^{j_\ell} \mathrm{d}t, \:\:\:\:\:\:(x,v)\in\p_{in}(SM), \: \gamma=\gamma_{x,v},
\end{equation}
where we used the summation convention, while $\eta(t)$ is some unit length
vector field on $\gamma$ that is parallel and perpendicular to
$\dot{\gamma}(t)$ and depends smoothly on $(x,v) \in \p_{in}(SM)$. We
note that the definition of the mixed ray transform is different in
higher dimensions, due to the freedom in the choice of $\eta$ (See
\cite[Section 7.2]{Shara}). We consider the choice of $\eta(t)$ and
the mapping properties of $L_{k,\ell}$ in dimension $2$.

We define two linear operators the images of which are
contained in the kernel of $L_{k,\ell}$. For a $(k\times
\ell)$-tensor, $f_{i_1,\ldots,i_kj_1,\ldots,j_\ell}$, we introduce the
symmetrization operator as
\begin{equation}
\label{eq:symmetrization}
(\sym(i_1,\ldots,i_k)f)_{i_1,\ldots,i_kj_1,\ldots,j_\ell}:=\frac{1}{k!}\sum_{\sigma}f_{i_{\sigma(1)},\ldots,i_{\sigma(k)}j_1,\ldots,j_\ell},
\end{equation}
where $\sigma$ runs over all permutations of $(1,2,\cdots,k)$. This
operator symmetrizes $f$ with respect to the first $k$ indices. We define
the symmetrization operator $\sym(j_1,\ldots,j_\ell)$, for the last
$\ell$ indices analogously.

We introduce a \textit{first} operator  $\lambda$ the image of which is contained
in the kernel of $L_{k,\ell}$. The operator $\lambda:S^{k-1} M \times S^{\ell-1} M
\to S^{k} M \times S^{\ell} M$ is defined by
\begin{equation}
\label{eq:map_lambda}
(\lambda w)_{i_1,\ldots,i_kj_1,\ldots,j_\ell}:= \sym(i_1,\ldots, i_k)\sym(j_1,\ldots, j_\ell)(g_{i_1j_1}w_{i_2,\ldots,i_kj_2,\ldots,j_\ell}).
\end{equation}
Using \eqref{eq:symmetrization} and $\eqref{eq:map_lambda}$ it is straightforward to verify that
\begin{equation}
\label{eq:map_lambda_kernel}
(\lambda w)_{i_1,\ldots,i_kj_1,\ldots,j_\ell}v^{i_1}\cdots v^{i_k}(v^\perp)^{j_1}\cdots (v^\perp)^{j_\ell}=0,  \quad v \in TM,
\end{equation}
where $v^\perp$ is any vector orthogonal to $v$. Therefore  \eqref{eq:map_lambda_kernel} implies that
\[
   \operatorname{Im}(\lambda) \subset \ker(L_{k,\ell}).
\]

We use the notation $u_{i_1,\ldots,i_k;h},$ for the ($h$) component
functions of the covariant derivative $\nabla u$ of the tensor field
$u$. We define the \textit{second} operator, $d'$ say, by the formula,
\begin{equation}
\label{eq:map_d'}
d':S^{k-1}M\times S^{\ell }M \to S^{k}M\times S^{\ell}M, \quad (d'u)_{i_1,\ldots,i_kj_1,\ldots,j_\ell}:=\sym(i_1,\ldots,i_k) u_{i_2,\ldots,i_kj_1,\ldots,j_\ell;i_1}.
\end{equation}
Then the following holds for any $u\in S^{k-1}M\times S^{\ell }M$,
\begin{multline}
\frac{d}{dt} \bigg(u_{i_1,\ldots,i_{k-1}j_1,\ldots, j_\ell}(\gamma(t))\dot\gamma(t)^{i_1}\cdots\dot\gamma(t)^{i_{k-1}}\eta(t)^{j_1} \cdots\eta(t)^{j_\ell} \bigg)
\\
=(d'u)_{i_1,\ldots,i_{k}j_1,\ldots, j_\ell}\dot\gamma(t)^{i_1}\cdots\dot\gamma(t)^{i_{k}}\eta(t)^{j_1} \cdots\eta(t)^{j_\ell}.
\end{multline}
If  $u|_{\p M}=0$, then $L_{k,\ell}(d'u)=0$ by the fundamental theorem of calculus. Thus 
\[
\{d'u:\: u\in S^{k-1}M\times S^{\ell }M, \: u|_{\p M}=0\}\subset \ker(L_{k,\ell}).
\]
Our main result shows that the kernel of $L_{k,\ell}$ is spanned by
the images of these two linear operators.
\begin{theorem}
\label{th:main}
Let $(M,g)$ be a simple $2$-dimensional Riemannian manifold. Let $f \in S^k M\times S^\ell M$, $k,\ell\geq 1$. Then
\[
L_{k,\ell}f(x,v)=0,\quad\quad(x,v)\in\p_{in}(SM)
\]
if and only if
\[
f=d'u+\lambda w, \quad u \in S^{k-1} M\times S^\ell M, \: u|_{\p M}=0, \quad w\in S^{k-1} M\times S^{\ell-1} M.
\]
\end{theorem}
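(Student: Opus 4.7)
The plan is to transfer the problem onto the unit sphere bundle $SM$ by contracting tensors with $v$ and $v^\perp$, and then to apply the two-dimensional Fourier analysis on $SM$ in the style of Paternain--Salo--Uhlmann for the symmetric tensor ray transform.

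Since simplicity implies orientability, $v\mapsto v^\perp$ is globally defined on $SM$, so one associates to $f\in S^kM\times S^\ell M$ the smooth function
\[
u_f(x,v):= f_{i_1\cdots i_k j_1\cdots j_\ell}(x)\,v^{i_1}\cdots v^{i_k}(v^\perp)^{j_1}\cdots (v^\perp)^{j_\ell}
\]
on $SM$. A direct computation in a fiberwise angular coordinate shows that $u_f$ is a trigonometric polynomial of fiberwise degree at most $k+\ell$, and that the map $f\mapsto u_f$ has kernel exactly $\operatorname{Im}(\lambda)$, the only algebraic relation between $v$ and $v^\perp$ being $g_{ij}v^i(v^\perp)^j=0$. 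By construction, $L_{k,\ell}f(x,v)$ equals $\int_0^{\tau(x,v)} u_f(\gamma_{x,v}(t),\dot\gamma_{x,v}(t))\,\d t$, so the hypothesis $L_{k,\ell}f\equiv 0$ is equivalent to $XU=-u_f$ on $SM$ with $U|_{\p SM}=0$, where $X$ is the geodesic vector field and $U$ is the forward integral of $u_f$ along the geodesic flow. Smoothness of $U$ on all of $SM$ follows from the standard Pestov--Uhlmann boundary regularity on a simple surface.

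Expand $U=\sum_{m\in\Z} U_m$ in fiberwise Fourier modes and split $X=\eta_++\eta_-$ into the usual raising and lowering operators of fiberwise degree. The main step, and the main obstacle, is to show that $U_m=0$ for every $|m|\geq k+\ell$. Since $u_f$ has no Fourier modes of degree exceeding $k+\ell$, the highest nonzero mode $U_M$ satisfies $\eta_+U_M=0$ and the most negative mode $U_{-N}$ satisfies $\eta_-U_{-N}=0$, i.e.\ they are fiberwise holomorphic or antiholomorphic with vanishing boundary trace. On a simple surface the Pestov identity with boundary, combined with the existence of fiberwise holomorphic integrating factors (Paternain--Salo--Uhlmann), forces such modes to vanish; iterating downward mode by mode yields the claim. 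The argument is the exact analogue of the finite-degree reduction in the symmetric tensor case, with the minor twist that the mixed contraction produces both positive and negative high modes, which must be handled symmetrically.

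Once $U$ has fiberwise Fourier degree at most $k+\ell-1$, Fourier-mode reconstruction gives a tensor field $\tilde u\in S^{k-1}M\times S^\ell M$ (unique modulo $\operatorname{Im}(\lambda)$) with $U=u_{\tilde u}$, and $U|_{\p SM}=0$ yields $\tilde u|_{\p M}=0$. The identity $Xu_{\tilde u}=u_{d'\tilde u}$, which is exactly the calculation displayed before Theorem~\ref{th:main}, then gives $u_{d'\tilde u}=-u_f$, hence $u_{f+d'\tilde u}\equiv 0$. By the kernel description of $f\mapsto u_f$ from the first step, $f+d'\tilde u=-\lambda w$ for some $w\in S^{k-1}M\times S^{\ell-1}M$, and setting $u:=-\tilde u$ produces the decomposition $f=d'u+\lambda w$ asserted in the theorem.
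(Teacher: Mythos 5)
Your proposal is correct in outline and reaches the theorem by a route that is best described as a reorganization of the paper's argument rather than a fundamentally different one: both proofs hinge on the same two ingredients, namely (i) the identification of $L_{k,\ell}f$ with the geodesic integrals of a function on $SM$ of fibre degree $k+\ell$, the kernel of the identification being exactly $\operatorname{Im}(\lambda)$, and (ii) the Paternain--Salo--Uhlmann tensor tomography theorem on simple surfaces. The difference is where PSU is invoked and how one returns to the mixed-tensor decomposition. The paper introduces an explicit intertwining operator $A$ (replacing the $v^{\perp}$-slots by $v$-slots with signs), proves $L_{k,\ell}f=I_{k+\ell}(\operatorname{Sym}(Af))$, cites the s-injectivity of $I_{k+\ell}$ as a black box, and then converts $\operatorname{Sym}(Af)=d^{s}v$ back into $f=d'u+\lambda w$ through three tensor identities ($A\operatorname{Sym}A=(-1)^{\ell}\operatorname{Id}$ modulo $\operatorname{Im}\lambda$, $A(d^{s}-d')\in\operatorname{Im}\lambda$, and $d'A=Ad'$). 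You instead stay on $SM$ throughout: you invoke PSU at the level of their transport-equation theorem and recover $\tilde u$ by Fourier-mode reconstruction. Your version avoids the paper's explicit algebra with $A$, at the price of unfolding more of the PSU machinery and of an extra reconstruction step.

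Two places need shoring up. First, the claim that $\ker(f\mapsto u_f)=\operatorname{Im}(\lambda)$ is precisely the content of the paper's Lemma~\ref{Le:Ker_of_AsymA} and is not a one-line computation: one must show that every partially symmetric $(k,\ell)$-tensor annihilated by the contraction lies in $\operatorname{Im}(\lambda)$ (the paper does this by an explicit recursion on basis elements in isothermal coordinates), and your reconstruction step additionally needs the pointwise injectivity of the map from symmetric $(k+\ell-1)$-tensors to fibrewise trigonometric polynomials, which is also what turns $U|_{\partial SM}=0$ into $\tilde u|_{\partial M}=0$. Second, your phrase ``the highest nonzero mode $U_M$ satisfies $\eta_+U_M=0$'' presupposes that $U$ has finitely many nonzero Fourier modes, which is exactly what is to be proved; the non-circular formulation is the PSU statement that a smooth $U$ with $U|_{\partial SM}=0$ and $XU$ of fibre degree $m$ has degree $\max(m-1,0)$, and you should cite or reprove it in that form. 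Relatedly, writing $U=u_{\tilde u}$ with $\tilde u\in S^{k-1}M\times S^{\ell}M$ requires $U$ to carry only modes of parity $k+\ell-1$; this follows because the wrong-parity part of $U$ is a first integral of the geodesic flow vanishing on $\partial SM$, hence zero, but it must be said. None of these is a fatal gap; each is a known fact or a short argument, and with them supplied your proof is complete.
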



The key observation needed to prove this theorem is that the mixed ray
transform and the geodesic ray transform can be transformed to one
another, for arbitrary $k,\ell\geq 1$, if $(M,g)$ is a $2$-dimensional
simple Riemannian manifold. A similar observation has already been
obtained for the transverse ray transform by Sharafutdinov
\cite[Chapter 5]{Shara}. The work by Paternain, Salo and Uhlmann
\cite{PSU} proved the s-injectivity of the geodesic ray transform on
simple manifolds in dimension $2$. In Theorem \ref{th:main}, we
characterize the kernel of $L_{k,\ell}$ using their results.

\section{Relation with elastic \textit{qS}-wave tomography}

We describe a mixed ray transform arising from elastic wave tomography. We follow the presentation in \cite[Chapter 7]{Shara}, wherein one can find more details. Let $(x^1,x^2)$ be any curvilinear coordinate system in $\mathbb{R}^2$, where the Euclidean metric is
$$
ds^2=g_{jk}dx^jdx^k.
$$

The elastic wave equations
\begin{equation}\label{elasticw}
\rho\frac{\partial^2 u_j}{\partial t^2}=\sigma_{jk;}^{\quad k}:=\sigma_{jk;l}g^{kl}
\end{equation}
describes the waves traveling in a two-dimensional elastic body $M\subset \R^2$. Here $u(x,t)=(u^1,u^2)$ is the displacement vector. The strain tensor is given by
$$
\varepsilon_{jk}=\frac{1}{2}(u_{j;k}+u_{k;j}),
$$
while the stress tensor is
$$
\sigma_{jk}=C_{jklm}\varepsilon^{lm},
$$
where $\mathbf{C}(x)=(C_{jklm})$ is the elastic tensor and $\rho(x)$ is the density of mass. Here $\varepsilon^{lm}$ is obtained by raising indices with respect to the metric $g_{jk}$. The elastic tensor has the following symmetry properties
\begin{equation}\label{Csymmetry}
C_{jklm}=C_{kjlm}=C_{lmjk}.
\end{equation}

We assume that the elastic tensor is weakly anisotropic, that is, it can be represented as
$$
C_{jklm}=\lambda g_{jk}g_{lm}+\mu(g_{jl}g_{km}+g_{jm}g_{kl})+\delta c_{jklm},
$$
where $\lambda$ and $\mu$ are positive functions  called the
Lam\'e parameters, and $\mathbf{c}=(c_{jklm})$ is an anisotropic
perturbation. Here, $\delta$ is a small positive real number. We note
here that $\mathbf{c}=0$ corresponds to an isotropic medium.

We construct geometric optics solutions to system $(\ref{elasticw})$ using the parameter $\omega=\omega_0/\delta$, 
\[
u_j=e^{i\omega\iota}\sum_{m=0}^\infty\frac{{u}_j^{(m)}}{(i\omega)^m},\quad\varepsilon_{jk}=e^{i\omega\iota}\sum_{m=-1}^\infty\frac{{\varepsilon}_{jk}^{(m)}}{(i\omega)^m},\quad\sigma_{jk}=e^{i\omega\iota}\sum_{m=-1}^\infty\frac{{\sigma}_{jk}^{(m)}}{(i\omega)^m},
\]
where $\iota(x)$ is a real function. 

We substitute the above solutions into equation $(\ref{elasticw})$, assume $u^{(-1)}=\varepsilon^{(-2)}=\sigma^{(-2)}=0$ and equate the terms of the order $-2$ and $-1$ respectively in $\omega$, to obtain
$$
(\lambda+\mu)\langle {u^{(0)}},\nabla\iota\rangle_g\nabla\iota+(\mu\|\nabla\iota\|_g^2-\rho){u^{(0)}}=0.
$$
If we take
\begin{equation}\label{eikonal}
\|\nabla\iota\|^2_g=\frac{\rho}{\mu},
\end{equation}
then 
$$
\langle {u^{(0)}},\nabla\iota\rangle_g=0.
$$
The solutions $u_j^{(0)}$ represent shear waves ($S$-waves), and the
displacement vector $u^{(0)}$ is orthogonal to $\nabla\iota$. We denote
$n_s=\rho/\mu$ and $v_s=1/n_s$. The characteristics of the eikonal
equation $(\ref{eikonal})$ are geodesics of the Riemannian metric
$n_s^2ds^2=n_s^2g_{jk}dx^jdx^k$.

We choose a geodesic $\gamma$ of metric $n_s^2ds^2$
and apply the change of variables,
$$
{u}_j^{(0)}=A_sn_s^{-1}\zeta_j,
$$
where
$$
A_s=\frac{C}{\sqrt{J\rho v_s}}, \quad J^2=n_s^2\det(g_{jk}), \quad C \hbox{ is a constant}.
$$
Then it is shown in \cite[Section 7.1.5.]{Shara} that $\zeta$ satisfies the following \textit{Rytov's law}
\begin{equation}\label{Rytov}
\left(\frac{D\zeta}{\mathrm{d}\iota}\right)_j=-i\frac{1}{\rho v_s^6}(\delta_j^q-\dot{\gamma}_j\dot{\gamma}^q)\omega_0c_{qklm}\dot{\gamma}^k\dot{\gamma}^m\zeta^l,
\end{equation}
where $\frac{D}{d\iota}
$ is the covariant derivative along $\gamma$. We note that $c_{qklm}\dot{\gamma}^k\dot{\gamma}^m$ is quadratic in $\dot{\gamma}$, and symmetric in $k,m$, so the solution $\zeta$ of \eqref{Rytov} depends only on the symmetrization 
$$
f_{jklm}=-i\frac{1}{4\rho v_s^6}(c_{jlkm}+c_{jmkl}).
$$

We assume that for every unit speed geodesic $\gamma:[a,b]\rightarrow M$ (in Riemannian manifold $(M,n_s^2ds^2)$) with endpoints in $\partial M$, the value $\zeta(b)$ of a solution to equation $(\ref{Rytov})$ is known as $\zeta(b)=U(\gamma)\zeta(a)$, where $U(\gamma)$ is the solution operator of \eqref{Rytov} and $\eta(a)$ is the initial value. We formulate an inverse problem.
\begin{problem}
Determine tensor field $f$ from $U(\gamma)$. 
\end{problem}
We linearize this problem as in \cite[Chapter 5]{Shara}.
Take a unit vector $\xi(t)\perp\dot{\gamma}(t)$, which is also parallel along $\gamma$. Then $e_1(t)=\xi(t)$ and $e_2(t)=\dot{\gamma}(t)$ form an orthonormal frame along $\gamma$. In this basis, equation $(\ref{Rytov})$ is
 \begin{equation}\label{Rytov1}
 \begin{split}
 \dot{\zeta}_1=-i\frac{1}{\rho v_s^6}\omega_0c_{1l1m}\dot{\gamma}^l\dot{\gamma}^m\zeta^1, \quad \dot{\zeta}_2=0.
  \end{split}
 \end{equation}
We denote $F(t)=-i\frac{1}{\rho v_s^6}\omega_0c_{1l1m}(\gamma(t))\dot{\gamma}^l(t)\dot{\gamma}^m(t)$. Since \eqref{Rytov1} is a separable first order ordinary differential equation, its solution is
 $$
 \zeta_1(b)=e^{\int_a^bF(t)dt}\zeta_1(a).
 $$
%
 
We take the first-order Taylor expansion of the right-hand side of the equation above   to obtain
 $$
  \zeta_1(b)- \zeta_1(a)\sim\int_a^bF(t)\zeta^1(a) dt.
 $$
 Multiplying this equation by $\eta^1(a)$, we get
\begin{equation}
\label{eq:linearized_problem}
  (\zeta_1(b)- \zeta_1(a))\zeta^1(a)\sim\int_a^bF(t)\zeta^1(a) \zeta^1(a)dt= \int_a^b \omega_0f_{11lm}(\gamma(t))\zeta^1(a)\zeta^1(a)\dot{\gamma}^l(t)\dot{\gamma}^m(t)dt.
\end{equation}
We denote the vector field $\eta(t)=\zeta^i(a)e_i(t), \: \zeta^2(a)=0$, and observe that it is parallel along $\gamma$ and perpendicular to $\dot{\gamma}(t)$. The right-hand side of \eqref{eq:linearized_problem} then takes the form
$$
\int_a^b \omega_0f_{11lm}(\gamma(t))\eta^1(t)\eta^1(t)\dot{\gamma}^l(t)\dot{\gamma}^m(t)dt,
$$
We arrive at the inverse problem. 
\begin{problem}
\label{IP}
Determine the tensor field $f$ from
$$
L_{2,2}(f)=\int_{a}^bf_{jklm}(\gamma(t))\eta^j(t)\eta^k(t)\dot{\gamma}^l(t)\dot{\gamma}^m(t)dt
$$
for all $\gamma$ and $\eta\perp\gamma$, where $\eta$ is parallel along
$\gamma$. 
\end{problem}

\begin{remark}
The tensor field $f$ possesses the same symmetry properties $(\ref{Csymmetry})$ as $\mathbf{C}$. Therefore $f\in S^2M\times S^2 M$. Since 
\[
L_{2,2}(f+d'u+\lambda w)=L_{2,2}(f), \quad \hbox{for any }u\in  S^1M\times S^2 M, \: w\in  S^1M\times S^1 M,
\]
we can only recover the tensor $f$ up to  the kernel of $L_{2,2}$. Thus the Inverse Problem \ref{IP} is a special case of Theorem \ref{th:main}.  
\end{remark}

\section{Context and previous work}

We note that if $\ell=0$ in (\ref{eq:mixed_ray_trans}), the operator
$L_{k,0}$ is the geodesic ray transform $I_k$ for a symmetric
$k$-tensor $f$.  It is well known that $\sym(i_1,\ldots,i_k)\nabla u$
is in the kernel of $I_k$, where $u$ is a symmetric $(k-1)$-tensor
with $u\vert_{\partial\Omega}=0$. If $I_kf=0$ implies
$f=\sym(i_1,\ldots,i_k)\nabla u$, we say $I_k$ is s-injective.

When $(M,g)$ is a $2$-dimensional simple manifold, Paternain, Salo and
Uhlmann \cite{PSU} proved the s-injectivity of $I_k$ for arbitrary
$k$. The standard way to prove s-injectivity of $I_0$ and $I_1$ is to use an energy identity known as the Pestov identity. If $k\geq 2$ this identity alone is not sufficient to prove the s-injectivity. The special case $k=2$ was proved earlier \cite{Shara1} using the proof for boundary rigidity \cite{PU2}.

In dimension three or higher, it has been proved that $I_0$ is
injective \cite{Muk2,Muk1}, and $I_1$ is s-injective \cite{AR}. The
s-injectivity of $I_k$ for $k\geq 2$ is still open for simple
Riemannian manifolds. Under certain curvature conditions, the
s-injectivity of $I_k,\,k\geq 2$ has been proved in \cite{Dair,
  Pestov, PS, Shara}. Without any curvature condition, it has been
proved that $I_2$ has a finite-dimensional kernel \cite{SU4}. If $g$
is in a certain open and dense subset of simple metrics in $C^r,
r\gg1$, containing analytic metrics, the s-injectivity is proved by
analytic microlocal analysis for $k=2$ \cite{SU}. Under a different
assumption that $M$ can be foliated by strictly convex hypersurfaces,
the s-injectivity has been established for $m=0$ \cite{UV}, and
$m=1,2$ \cite{SUV2}.
 
The mixed ray transform ($\ell\neq 0,\,k\neq 0$) is not studied as
extensively as the geodesic ray transform.  In dimension two or
higher, a result similar to Theorem~\ref{th:main} has been obtained
under a restrictive curvature condition \cite{Shara}.
 
When $k=0$, $L_{0,\ell}$ is called the transverse ray transform, also
denoted by $J_\ell$. For $J_\ell$, the situations are quite different
for dimension two and higher dimensions. In dimension three or higher,
$J_\ell$ is injective for $\ell< \dim M$ under certain curvature
conditions \cite{Shara}. However, $J_\ell$ has a nontrivial kernel in
dimension $2$. This problem is related to \textit{polarization}
tomography, for which some results are given under different conditions \cite{Holman, novikov, paternain1605geodesic}.

\section{Proof of Theorem~\ref{th:main}}

Since $(M,g)$ is a $2$-dimensional simple Riemannian manifold, there
exists a diffeomorphism $\phi$ from $M$ onto a closed unit disc
$\overline{\mathbb{D}} $ of $\R^2$. If $g'$ is the pullback of metric
$g$ under $\phi^{-1}$ on $\overline{\mathbb{D}}$ then $g'$ is
conformally Euclidean, meaning that there exists a change of
coordinates after which $g'=he$, where $h$ is some positive function
and $e$ is the Euclidean metric; this was shown in \cite[Theorem
  4]{ahlfors1955conformality} and \cite[ Proposition
  1.3]{Sy}. Therefore there exists global isothermal coordinates
$(x_1,x_2)$ on $M$, so that the metric $g$ can be written as
$e^{2\alpha(x)}(\mathrm{d}x_1^2+\mathrm{d}x_2^2)$ where $\alpha(x)$ is
a smooth real-valued function of $x$.

The global isothermal coordinate structure makes it possible to define
a smooth rotation,
\[
\sigma:TM \to TM, \quad \sigma(v):=(v_2,-v_1),
\]
where $v=(v_1,v_2)$ in these coordinates. This map satisfies
\begin{equation}
\label{eq:prop_of_sigma}
v \perp \sigma(v) \quad \hbox{ and } \quad  \|v\|_g=\|\sigma(v)\|_g.
\end{equation}
Moreover, there exists a linear map
\begin{equation}
\label{eq:mixed_tensor}
\Phi:S^kM \times S^\ell M \to C^\infty(SM), \quad (\Phi f)(x,v):=f_{i_1,\ldots,i_kj_1,\ldots, j_\ell}(x)v^{i_1}\cdots v^{i_k}\sigma(v)^{j_1}\cdots \sigma(v)^{j_\ell}.
\end{equation}
Thus each tensor field $f\in S^k M \times S^\ell M$ is related to a
smooth function on $SM$ via \eqref{eq:mixed_tensor}. We
note that $\Phi$ is not one-to-one since $\Phi(\lambda w)=0$ for any
$w\in S^{k-1} M \times S^{\ell-1} M$, where $\lambda$ is as in
\eqref{eq:map_lambda}. We have the following

\begin{lemma}
For any $f \in S^kM \times S^\ell M $ it holds that
\begin{equation}
\label{eq:map_prop_of_L}
L_{k,\ell} f(x,v)=\int_{0}^{\tau(x,v)} (\Phi f)(\gamma_{x,v}(t),\dot \gamma_{x,v}(t))\mathrm{d}t, \quad\quad(x,v)\in\p_{in}(SM)
\end{equation}
and
\[
L_{k,\ell}:S^kM \times S^\ell M \to C^{\infty}(\p_{in} SM),
\]
if we assume that 
\[
\eta(0)=\sigma(v), \quad (x,v)\in \p_{in}(SM).
\]
\end{lemma}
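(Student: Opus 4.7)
The proof should be a direct verification, and the plan is to reduce the statement to a single parallel-transport fact: under the normalization $\eta(0)=\sigma(v)$, the field $\eta(t)$ coincides with $\sigma(\dot\gamma_{x,v}(t))$ along all of $\gamma_{x,v}$. Once that is in hand, formula \eqref{eq:map_prop_of_L} is obtained by substituting this identity into \eqref{eq:mixed_ray_trans} and recognizing the integrand as the definition \eqref{eq:mixed_tensor} of $\Phi$.

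The main (and essentially only) computation is to show that $V(t):=\sigma(\dot\gamma(t))$ is parallel along $\gamma=\gamma_{x,v}$. By \eqref{eq:prop_of_sigma}, $V(t)$ has unit $g$-length and is $g$-orthogonal to $\dot\gamma(t)$. Metric compatibility of $\nabla$ gives $\langle DV/\d t,V\rangle_g=\frac{1}{2}\frac{\d}{\d t}\|V\|_g^{2}=0$, while differentiating $\langle V,\dot\gamma\rangle_g=0$ along $\gamma$ and using $D\dot\gamma/\d t=0$ yields $\langle DV/\d t,\dot\gamma\rangle_g=0$. Since $\{V(t),\dot\gamma(t)\}$ is an orthonormal basis of the two-dimensional space $T_{\gamma(t)}M$, any vector $g$-orthogonal to both must vanish, hence $DV/\d t\equiv 0$. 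With $\eta(0)=\sigma(v)=V(0)$, uniqueness of parallel transport then forces $\eta(t)=V(t)=\sigma(\dot\gamma(t))$ on $[0,\tau(x,v)]$, which delivers \eqref{eq:map_prop_of_L}.

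For the regularity statement, I would combine three smoothness facts: $\Phi f\in C^\infty(SM)$ by construction; the geodesic flow $(x,v,t)\mapsto(\gamma_{x,v}(t),\dot\gamma_{x,v}(t))$ is smooth on its domain; and $\tau$ is smooth on $\p_{in}(SM)$ as recalled in the introduction. Smooth dependence of an integral on its integrand and its upper limit then yields $L_{k,\ell}f\in C^\infty(\p_{in}(SM))$.

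I do not expect a serious obstacle here; the one step that genuinely uses geometry is the parallel-transport computation in the second paragraph, and it relies crucially on $\dim M=2$: the orthogonal complement of $\dot\gamma(t)$ is one-dimensional, so a smooth unit field everywhere perpendicular to $\dot\gamma$ is automatically parallel, and the rotated field $\sigma(\dot\gamma(t))$ is necessarily that one once its value at $t=0$ is fixed to match $\eta(0)$.
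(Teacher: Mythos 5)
Your proposal is correct and follows essentially the same route as the paper: both reduce the lemma to the identity $\eta(t)=\sigma(\dot\gamma_{x,v}(t))$ (the paper shows the parallel transport $P_t\eta$ must equal $\sigma(\dot\gamma(t))$ by continuity and two-dimensionality, while you verify directly that $D\sigma(\dot\gamma)/\d t=0$ and invoke uniqueness of parallel transport — the same fact argued in the opposite direction), and the smoothness claim is handled identically via the smoothness of $\Phi f$, the geodesic flow, and $\tau$ on $\p_{in}(SM)$.
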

\begin{proof}

Let $(x,v)\in \p_{in}SM$. We define $\eta=\sigma(v).$ Let $P_t(\eta)$
be the parallel transport of $\eta$ from $T_xM$ to
$T_{\gamma_{x,v}(t)}M$, $t \in [0,\tau(x,v)]$. By the property of
parallel translation, $P_t:T_xM \to T_{\gamma_{x,v}(t)}M$ is an
isometry, whence $\|P_t\eta\|_g=1$ and $\langle
P_t\eta,\dot{\gamma}(t)\rangle_g=0$. Since $M$ is $2$-dimensional, the
continuity of $P_t\eta$ in $t$ with \eqref{eq:prop_of_sigma} imply
\[
P_t\eta=\sigma(\dot\gamma_{x,v}(t)).
\] 

Because the functions $\Phi f$ and $\tau$ are smooth in $\p_{in} (SM)$, the function $L_{k,\ell}(f)$ is smooth in $\p_{in} (SM)$ due to \eqref{eq:map_prop_of_L}.
\end{proof}

\noindent
Let $f \in S^k M\times S^\ell M$. Simplifying the notation, from here on we do not distinguish tensor $f$ from function $\Phi(f)$. We notice first that
\begin{equation}
\label{eq:observation_1}
f(x,v)=(-1)^{\ell -N(j_1,\ldots, j_\ell)}f_{i_1,\ldots,i_kj_1,\ldots, j_\ell}(x)v^{i_1}\cdots v^{i_k}v_1^{\ell- N(j_1,\ldots, j_\ell)}v_2^{{N(j_1,\ldots, j_\ell)}}, \quad (x,v)\in SM,
\end{equation}
where ${N(j_1,\ldots, j_\ell)}$ is the number of $1$s in $(j_1,\ldots,
j_\ell)$. We let $\delta$ be the map that maps $1$s in $(j_1,\ldots,
j_\ell)$ to $2$s and vice versa. We denote by $\delta(j_1,\ldots,
j_\ell)$ the $\ell$-tuple obtained from applying $\delta$ to
$(j_1,\ldots, j_\ell)$.  Then we define a linear operator
\begin{equation}
\label{eq:A_def}
A:S^k M \times S^\ell M \to S^k M \times S^\ell M, \quad
(Af)_{i_1,\ldots,i_kj_1,\ldots, j_\ell}=(-1)^{\ell -N(j_1,\ldots, j_\ell)}f_{i_1,\ldots,i_k\delta(j_1,\ldots, j_\ell)}.
\end{equation}
We note that if $\ell=1,$ then $A$ and the Hodge star operator coincide. Formula \eqref{eq:A_def} implies that $A$ is invertible with the following inverse
\begin{equation}
\label{eq:A_inverse}
A^{-1}=(-1)^\ell A.
\end{equation}
We then point out that
\begin{equation}
\label{eq:symmetric}
(Af)_{i_1,\ldots ,i_kj_1,\ldots, j_\ell}(x)v^{i_1}\ldots v^{i_k}v^{j_1}\cdots v^{j_\ell}=(\hbox{Sym} Af)_{i_1,\ldots i_kj_1,\ldots, j_\ell}(x)v^{i_1}\ldots v^{i_k}v^{j_1}\cdots v^{j_\ell}.
\end{equation}
The notation Sym$h$ stands for the full symmetrization of the tensor field $h$.
 
\medskip

Using equations \eqref{eq:observation_1}, \eqref{eq:A_def} and
\eqref{eq:symmetric}, we find that
\begin{equation}
\label{eq:mixed_and_ray_trans}
L_{k,\ell}(f)=I_{k+\ell}(\sym(Af)),
\end{equation}
where $I_{k+\ell}$ is the geodesic ray transform on symmetric tensor field $h \in S^{k+\ell}M$, defined by the formula
$$
I_{k+\ell}(h)(x,v)=\int_{0}^{\tau(x,v)}h_{i_1,\ldots,i_{k+\ell}}(\gamma_{x,v}(t))\dot\gamma_{x,v}(t)^{i_1}\cdots \dot\gamma_{x,v}(t)^{i_{k+\ell}}\mathrm{d}t ,\quad\quad(x,v)\in\p_{in}(SM).
$$
By \eqref{eq:mixed_and_ray_trans} and  \cite[Theorem 1.1]{PSU} it holds that for any $h\in S^kM \times S^\ell M$,
\begin{equation}
\label{eq:kernel_of_ray_trans}
L_{k,\ell}(h)=0 \hbox{ if and only if } \hbox{Sym}Ah=d^sv,  \quad v \in S^{k+\ell-1}M, \quad v|_{\p M}=0.
\end{equation}
In the above, $d^s$ stands for the inner derivative, that is, the symmetrization of the covariant derivative
\begin{equation}
\label{eq:d^s}
d^su=\sym(\nabla u),  \quad u \in S^{k+\ell-1}M.
\end{equation}

If $L_{k,\ell}(f)=0$ then, with \eqref{eq:A_inverse} and \eqref{eq:kernel_of_ray_trans}, we can write  
\[
f=(-1)^{\ell}A(\hbox{Sym}(Af)+(Af-\hbox{Sym}(Af)))=(-1)^{\ell}A(d^s u)+f+(-1)^{\ell+1}A(\hbox{Sym}(Af)).
\]
We conclude that the claim of Theorem \ref{th:main} holds if 
\[f+(-1)^{\ell+1}A(\sym(Af))=\lambda w, \quad  A(d^su-d'u)= \lambda w', \quad d'A=Ad' ,
\]
for some  $w,w' \in S^{k-1}M\times S^{\ell -1}M$ and $u \in S^{k+\ell-1}M$. These equations will be proved in the following subsections.

\subsection{Analysis of operator $A\sym A$}

In this subsection, we prove the following identity for any $f\in S^{k} M \times S^{\ell} M$:
\begin{equation}
\label{eq:f-AsymA}
f+(-1)^{\ell+1}A(\sym(Af))=\lambda w \quad \hbox{ for some }
w \in S^{k-1} M \times S^{\ell-1} M.
\end{equation}
We start with a lemma that characterizes the kernel of $A\sym A$

\begin{lemma}
\label{Le:Ker_of_AsymA}
For the linear maps $A\emph{\sym} A
:S^{k} M \times S^{\ell}M \to S^{k} M \times S^{\ell}M 
$ and 
\\
$\lambda:S^{k-1} M \times S^{\ell-1}M \to S^{k} M \times S^{\ell}M 
$ the following holds
\[
\ker(A \emph{\sym} A) = \emph{Im}(\lambda).
\]
\end{lemma}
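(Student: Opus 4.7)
The plan is to show that $\operatorname{Im}(\lambda) \subseteq \ker(A\,\sym\,A)$ and that both spaces have equal fibre dimension over each $x \in M$, whence they must agree.

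First I would establish fibrewise injectivity of $\lambda$. Contracting \eqref{eq:map_lambda} with $v^{i_1}\cdots v^{i_k}u^{j_1}\cdots u^{j_\ell}$ makes the two symmetrization operators redundant, yielding
\begin{equation*}
(\lambda w)_{i_1\ldots i_k j_1 \ldots j_\ell}\, v^{i_1}\cdots v^{i_k} u^{j_1} \cdots u^{j_\ell} = \langle v, u\rangle_g \cdot w_{i_2 \ldots i_k j_2 \ldots j_\ell}\, v^{i_2}\cdots v^{i_k} u^{j_2} \cdots u^{j_\ell}.
\end{equation*}
If $\lambda w = 0$, the right-hand side vanishes identically as a polynomial in $(v,u) \in T_x M \oplus T_x M$; since $\langle v,u\rangle_g$ is a nonzero element of the polynomial domain $\R[v^1,v^2,u^1,u^2]$, hence not a zero divisor, the second factor must vanish, and the faithful polynomial encoding of tensors in $S^{k-1} M \times S^{\ell-1}M$ forces $w = 0$. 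Thus $\dim \operatorname{Im}(\lambda)_x = \dim (S^{k-1} M \times S^{\ell-1}M)_x = k\ell$ at every $x \in M$.

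Next I would compute the fibre dimension of $\ker(A\,\sym\,A)$. Because $A$ is a bundle isomorphism by \eqref{eq:A_inverse}, this kernel coincides with $A^{-1}(\ker \sym)$ and has the same fibre dimension as $\ker \sym$. The full symmetrization $\sym : S^k M \times S^\ell M \to S^{k+\ell}M$ is surjective (it is the identity on the subbundle $S^{k+\ell}M \subseteq S^k M \times S^\ell M$), so
\begin{equation*}
\dim (\ker \sym)_x = (k+1)(\ell+1) - (k+\ell+1) = k\ell,
\end{equation*}
which matches $\dim \operatorname{Im}(\lambda)_x$.

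To close the argument I would establish the inclusion $\operatorname{Im}(\lambda) \subseteq \ker(A\,\sym\,A)$ by relating $\Phi$ to $\sym A$. Comparing \eqref{eq:observation_1} and \eqref{eq:A_def} via the index substitution $J \mapsto \delta(J)$ (noting $N(\delta(J))=\ell - N(J)$), one obtains
\begin{equation*}
\Phi(f)(x,v) = (-1)^\ell (A f)_{IJ}\, v^I v^J = (-1)^\ell (\sym Af)_{IJ}\, v^I v^J,
\end{equation*}
the last equality being \eqref{eq:symmetric}. Because the polynomial representation of a fully symmetric tensor is faithful, $\Phi(f) \equiv 0$ in $v$ is equivalent to $\sym(Af) = 0$, and invertibility of $A$ then gives equivalence with $A\,\sym\,A\,f = 0$. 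Combined with \eqref{eq:map_lambda_kernel}, which says exactly that $\Phi(\lambda w) \equiv 0$ for every $w$, this yields $\operatorname{Im}(\lambda) \subseteq \ker(A\,\sym\,A)$, and the fibrewise dimension count forces equality.

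The main technical obstacle is the sign bookkeeping in the identity $\Phi(f) = (-1)^\ell(\sym Af)_{IJ} v^I v^J$, which requires tracking how $\sigma(v)^J$ interacts with the definition of $A$; once that is secured the rest is elementary linear algebra and dimension counting.
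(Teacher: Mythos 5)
Your proposal is correct, but it takes a genuinely different route from the paper. The paper works entirely with the monomial basis $\big((\otimes^{h}dx^1)\otimes_s(\otimes^{k-h}dx^2)\big)\otimes\big((\otimes^{a}dx^1)\otimes_s(\otimes^{\ell-a}dx^2)\big)$ in isothermal coordinates: it computes $A$ and $\lambda$ on these elements, derives the recursion $f_{H,h}=\lambda w_h'+(-1)^hf_{H,0}$, and concludes $\ker(\sym A)\subset\operatorname{Im}(\lambda)$ from $\sym Af_{H,0}\neq 0$. That computation is longer but constructive, and the by-product normal form $f=\lambda w+\sum_H r_Hf_{H,0}$ (equation \eqref{eq:rep_of_f}) is reused immediately afterwards to prove \eqref{eq:AsymA_f_0}; your argument does not supply that. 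Your route instead combines three clean observations: fibrewise injectivity of $\lambda$ via the factorization $(\lambda w)(v,\dots,v;u,\dots,u)=\langle v,u\rangle_g\,w(v,\dots,v;u,\dots,u)$ and the integral-domain argument (correct, and the double polarization does recover $w$ from this bi-homogeneous polynomial); the rank--nullity count $\dim\ker(\sym)_x=(k+1)(\ell+1)-(k+\ell+1)=k\ell$ transported through the isomorphism $A$; and the identity $\Phi(f)=(-1)^{\ell}(\sym Af)_{IJ}v^Iv^J$, which together with \eqref{eq:map_lambda_kernel} gives the inclusion. Your sign bookkeeping is right (in fact \eqref{eq:mixed_and_ray_trans} as printed is missing this harmless factor $(-1)^\ell$, which of course does not affect kernels). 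Two small points you should make explicit: first, $\Phi(f)$ is only defined on $SM$, so you need homogeneity to pass from vanishing on unit vectors to vanishing of the symmetric tensor $\sym(Af)$; second, the dimension count is pointwise, so to conclude $f=\lambda w$ with $w$ \emph{smooth} you should note that $\lambda$ is an injective vector-bundle morphism of constant rank and therefore admits a smooth left inverse on its image. Neither is a real gap, and overall your argument is shorter and less combinatorial than the paper's, at the cost of being non-constructive.
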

\begin{proof}
We use the notation $\otimes_s$ for the symmetric product of tensors. We note that operator $A$ maps a basis element $\big((\otimes^{h}dx^1) \otimes_s(\otimes^{k-h} dx^2)\big)\otimes \big((\otimes^{a}dx^1) \otimes_s(\otimes^{\ell-a} dx^2)\big),  \: h\in \{0,\ldots, k\}, a \in \{0,\ldots, \ell\}$ of $S^k M \times S^\ell M$ to
\[
(-1)^{\ell-a}\big((\otimes^{h}dx^1) \otimes_s(\otimes^{k-h} dx^2)\big)\otimes \big((\otimes^{\ell-a}dx^1) \otimes_s(\otimes^{a} dx^2)\big).
\]
We also note that the choice of isothermal coordinates implies
\begin{equation}
\label{eq:lambda_isothermal}
\lambda (a\otimes b)=e^{2\alpha(x)}\big((dx^1 \otimes_s a) \otimes (dx^1 \otimes_s b)+ (dx^2 \otimes_s a) \otimes ( dx^2\otimes_s b) \big), \quad a\otimes b\in S^{k-1} M \times S^{\ell-1}M.
\end{equation}
Since $A$ is a bijection, it suffices to prove
\begin{equation}
\label{eq:inclusions}
   \operatorname{Im}(\lambda) =\ker (\sym A).
\end{equation}
We prove first that $\operatorname{Im}(\lambda) \subset\ker (\sym A)$. 
In view of the linearity of $\lambda$, it suffices to prove that $\lambda w\in \ker \sym A$ when 
\[
w=r(x)\big((\otimes^{h-1}dx^1) \otimes_s(\otimes^{k-h} dx^2)\big)\otimes \big((\otimes^{a-1}dx^1) \otimes_s(\otimes^{\ell-a} dx^2)\big), \quad h\in \{1,\ldots, k\}, a \in \{1,\ldots, \ell\}.
\]
Then
\begin{multline}
e^{-2\alpha(x)}A \lambda w= (-1)^{\ell-a}r(x)\bigg(\big((\otimes^{h}dx^1) \otimes_s(\otimes^{k-h} dx^2)\big)\otimes \big((\otimes^{\ell-a}dx^1)\otimes_s(\otimes^{a} dx^2)\big) 
\\
- \big((\otimes^{h-1}dx^1) \otimes_s(\otimes^{k-h+1} dx^2)\big)\otimes \big((\otimes^{\ell-a+1}dx^1) \otimes_s(\otimes^{a-1} dx^2)\big)\bigg).
\end{multline}
Since $\sym$ is a linear operator, we have $\sym A(\lambda
w)=0$. Therefore $\operatorname{Im}(\lambda) \subset \ker(\sym A)$

Now we prove that $\ker (\sym A) \subset  \operatorname{Im}(\lambda)$. 
We assume first that $f=\sum_{m=1}^M u_m$, where 
\begin{equation}
\label{eq:first_case}
u_m = r_m(x)\big((\otimes^{h}dx^1) \otimes_s(\otimes^{k-h} dx^2)\big)\otimes \big((\otimes^{\ell-a}dx^1) \otimes_s(\otimes^{a} dx^2)\big), \quad h+a \leq \min\{k,\ell\}.
\end{equation}
Then we can write $f=\sum_{H=0}^{k+\ell}f_H,$ where $f_H=0,$ if $H\geq \min\{k,\ell\}$ and otherwise 
\[
\begin{split}
f_H=\sum_{h=0}^{H}a_{H,h} f_{H,h}, \quad f_{H,h}:=\big((\otimes^{h}dx^1) \otimes_s(\otimes^{k-h} dx^2)\big)\otimes \big((\otimes^{\ell-(H-h)}dx^1) \otimes_s(\otimes^{H-h} dx^2)\big).
\end{split}
\]  
Moreover $f \in \ker (\sym A)$ if and only if $f_H \in \ker (\sym A)$ for every $H\in \{1,\ldots, \min\{k,\ell\}\}$. In the following we study the tensor $f_H$, for a  given  $H\in \{1,\ldots, \min\{k,\ell\}\}$. 

\medskip

For $h \in \{1,\ldots, H\}$ we define $w_h\in S^{k-1} M \times S^{\ell-1}M$ by formula
\[
w_h=\big((\otimes^{h-1}dx^1) \otimes_s(\otimes^{k-h} dx^2)\big)\otimes \big((\otimes^{\ell-(H-h+1)}dx^1) \otimes_s(\otimes^{H-h} dx^2)\big).
\]
Then \eqref{eq:lambda_isothermal} yields
\[
\begin{split}
\lambda w_h
=&e^{2\alpha(x)}(f_{H,h}+f_{H,h-1}).
\end{split}
\]
This implies the recursive formula
\[
f_{H,h}=\lambda (e^{-2\alpha(x)}w_h) -f_{H,h-1}.
\]
Thus for every $h \in \{0,\ldots, H\}$ there exists $w'_h \in S^{k-1} M \times S^{\ell-1}M$  such that
\begin{equation}
\label{eq:recursive_2}
f_{H,h}=\lambda w'_h +(-1)^{h}f_{H,0}.
\end{equation}
Therefore there exists $w_H \in S^{k-1} M \times S^{\ell-1}M$ such that
\[
f_H=\sum_{h=0}^{H}a_{H,h}f_{H,h}=\lambda w_H+ f_{H,0}\sum_{h=0}^{H} (-1)^{h}a_{H,h}.
\]

If $f \in \ker \sym A$  it holds by the first part of this proof that
\[
\sym A f_H=(\sym Af_{H,0})\bigg(\sum_{i=0}^{H}(-1)^{h}a_{H,h}\bigg)=0.
\]
Since $\sym Af_{H,0}\neq 0$ it follows that $\sum_{i=0}^{H}(-1)^{h}a_{H,h}=0$ whence $f_H=\lambda w_H$. This implies $f=\lambda w$ for some $w\in S^{k-1} M \times S^{\ell-1}M$.

\medskip
If $f \in \ker\sym A$ and we cannot write $f=\sum_{m=1}^Mu_m$, where each $u_m$ satisfies \eqref{eq:first_case}, then there exists $u_m$ that satisfies
\[
(\otimes^{h}dx^1) \otimes_s(\otimes^{k-h} dx^2)\big)\otimes \big((\otimes^{\ell-a}dx^1) \otimes_s(\otimes^{a} dx^2), \quad \min \{k,\ell\} < h+a \leq  \max \{k,\ell\}.
\] 
Therefore $f_{H}\neq 0$ for some $\min \{k,\ell\} <H \leq  \max \{k,\ell\}$ and there exist two sub cases. If $k< H \leq \ell$, then
\[
f_H=\sum_{h=0}^{k} a_{H,h}f_{H,h}, \quad f_{H,h}=\big((\otimes^{h}dx^1) \otimes_s(\otimes^{k-h} dx^2)\big)\otimes \big((\otimes^{\ell-(H-h)}dx^1) \otimes_s(\otimes^{H-h} dx^2)\big).
\]
If $\ell < H \leq k$, then
\[
f_H=\sum_{h=0}^{\ell} a_{H,h}f_{H,h}, \quad f_{H,h}=\big((\otimes^{H-\ell+h}dx^1) \otimes_s(\otimes^{k-h-H+\ell} dx^2)\big)\otimes \big((\otimes^{h}dx^1) \otimes_s(\otimes^{\ell-h} dx^2)\big).
\]
By an analogous recursive argument as before, we find that $f=\lambda
w$, for some $w \in S^{k-1} M \times S^{\ell-1}M$. This completes the
proof. \end{proof}

By the proof of the previous Lemma we can write any $f \in S^{k} M \times S^{\ell}M$ in the form
\begin{equation}
\label{eq:rep_of_f}
f=\lambda w + \sum_{H=0}^{k+\ell} r_H f_{H,0}, \quad r_H\in C^\infty(M),
\end{equation}
for some $w \in S^{k-1} M \times S^{\ell-1}M $. 
Next, we prove that
\begin{equation}
\label{eq:AsymA_f_0}
A\sym A f_{H,0}=(-1)^{\ell}f_{H,0}+\lambda w, \quad H \in \{1,\ldots, k+\ell\}.
\end{equation}
We assume first that $H\leq \min\{k,\ell\}$. Then 
\[
f_{H,0}=\big(\otimes^{k} dx^2\big)\otimes \big((\otimes^{\ell-H}dx^1) \otimes_s(\otimes^{H} dx^2)\big).
\]
This implies
\[
\begin{split}
\sym A f_{H,0}=&(-1)^{\ell}(\otimes^{H}dx^1\otimes_s(\otimes^{k+\ell-H}dx^2))
\\
=&(-1)^{\ell}\frac{1}{(k+\ell)!}\sum_{h=0}^{H}A_h(\otimes^{h}dx^1\otimes_s(\otimes^{k-h}dx^2)) \otimes (\otimes^{H-h}dx^1\otimes_s(\otimes^{\ell-H+h}dx^2)),
\end{split}
\]
where $\sum_{h=0}^{H}A_h=(k+\ell)!$. Using \eqref{eq:recursive_2} we obtain
\[
\begin{split}
A\sym A f_{H,0}
=&(-1)^{\ell}\frac{1}{(k+\ell)!}\sum_{h=0}^{H}(-1)^{h}A_hf_{H,h}=(-1)^{\ell}\frac{1}{(k+\ell)!}\bigg(\sum_{h=0}^{H}A_h\bigg)f_{H,0} +\lambda w
\\
=&(-1)^{\ell}f_{H,0}+\lambda w.
\end{split}
\]
If $\min\{k,\ell\} < H \leq \max\{k,\ell\}$ it follows by a similar
argument that $A\sym A f_{H,0}=(-1)^{\ell}f_{H,0}+\lambda
w$. Therefore, we proved \eqref{eq:AsymA_f_0}.

\medskip\medskip

\noindent
Equation \eqref{eq:f-AsymA} follows from Lemma~\ref{Le:Ker_of_AsymA}
and \eqref{eq:rep_of_f}--\eqref{eq:AsymA_f_0}.

\subsection{Analysis of operator $A d^s$}

We note that $S^{k+\ell}M\subset S^{k}M\times S^{\ell}M$. Therefore, we
can extend the inner derivative, $d^s$, to an operator $d^s:S^{k-1}M\times
S^{\ell}M \to S^{k}M\times S^{\ell}M$ and evaluate $d^s-d'$. In this
subsection, we show that for any $u \in S^{k-1}M\times S^{\ell}M $ the
following equations hold,
\begin{align} \label{eq:A(d^s-d')}
 A(d^su-d'u) &= \lambda w \quad \hbox{ for some }
 w \in S^{k-1}M\times S^{\ell -1}M; 
\\
\label{eq:Ad'=d'A}
 d'A &=Ad'.
\end{align}
Since $Ad^s$ and $A d'$ are linear it suffices to prove the claims for 
\[
u=r(x)\big((\otimes^{h-1}dx^1) \otimes_s(\otimes^{k-h} dx^2)\big)\otimes \big((\otimes^{a}dx^1) \otimes_s(\otimes^{\ell-a} dx^2)\big), \quad r\in C^{\infty}(M).
\]
By \eqref{eq:map_d'} and \eqref{eq:A_def} we have
\begin{multline} \label{eq:Ad'u}
Ad'u= (-1)^{\ell-a}\bigg(\bigg(\frac{\p}{\p x^1}r(x)-R_1\bigg)\big((\otimes^{h}dx^1) \otimes_s(\otimes^{k-h} dx^2)\big)\otimes \big((\otimes^{\ell-a}dx^1) \otimes_s(\otimes^{a} dx^2)\big)
\\
+ \bigg(\frac{\p}{\p x^2}r(x)-R_2\bigg)\big((\otimes^{h-1}dx^1) \otimes_s(\otimes^{k-h+1} dx^2)\big)\otimes \big((\otimes^{\ell-a}dx^1) \otimes_s(\otimes^{a} dx^2)\big)\bigg),
\end{multline}
where $R_m=\sum_{s=1}^{k+\ell-1}r_{i_1,\ldots,i_{s-1}p,i_{s+1},\ldots, i_{k+\ell}}\Gamma^p_{mi_s}$, $m\in \{1,2\}$ and $r_{i_1,\ldots,i_{s-1}p,i_{s+1},\ldots, i_{k+\ell}}\in \{0,r\}$ depending on $(i_1,\ldots, i_{k+\ell})$.

We write $H=h+a$, assume that $H \leq \min\{k,\ell\}$ and denote
$\widetilde R_m= \frac{\p}{\p x^m}r(x)-R_m$. Then we obtain from
\eqref{eq:A_def} and \eqref{eq:d^s},
\[
\begin{split}
d^su
=&\:\widetilde R_1 \frac{1}{(k+\ell)!} \sum^{H}_{j=0}A_j\big((\otimes^{j}dx^1) \otimes_s(\otimes^{k-j} dx^2)\big)\otimes  \big((\otimes^{H-j}dx^1) \otimes_s(\otimes^{\ell+j-H} dx^2)\big)
\\
+&\:  \widetilde R_2 \frac{1}{(k+\ell)!}\sum^{H-1}_{i=0}B_i\big((\otimes^{i}dx^1) \otimes_s(\otimes^{k-i} dx^2)\big)\otimes  \big((\otimes^{H-i-1}dx^1) \otimes_s(\otimes^{\ell+i-H+1} dx^2)\big),
\end{split}
\]
where $\sum_{j=0}^H A_j=\sum B_{i=0}^{H-1}=(k+\ell)!$. This
yields
\begin{multline}
Ad^su = \widetilde R_1 \frac{1}{(k+\ell)!} \sum^{H}_{j=0}(-1)^{\ell-H+j}A_j\big((\otimes^{j}dx^1) \otimes_s(\otimes^{k-j} dx^2)\big)\otimes  \big((\otimes^{\ell+j-H}dx^1) \otimes_s(\otimes^{H-j} dx^2)\big)
\\
-\widetilde R_2 \frac{1}{(k+\ell)!}\sum^{H-1}_{i=0}(-1)^{\ell-H+i}B_i\big((\otimes^{i}dx^1) \otimes_s(\otimes^{k-i} dx^2)\big)\otimes  \big((\otimes^{\ell+i-H+1}dx^1) \otimes_s(\otimes^{H-i-1} dx^2)\big).
\end{multline}
We define
\[
g_{H,j}=\big((\otimes^{j}dx^1) \otimes_s(\otimes^{k-j} dx^2)\big)\otimes \big((\otimes^{\ell +j-H}dx^1) \otimes_s(\otimes^{H-j} dx^2)\big), \quad j \in \{0,\ldots, H\},
\] 
and
\[
v_{H,j}=\big((\otimes^{j}dx^1) \otimes_s(\otimes^{k-j-1} dx^2)\big)\otimes \big((\otimes^{\ell+j-H}dx^1) \otimes_s(\otimes^{H-j-1} dx^2)\big), \quad j \in \{1,\ldots, H\}.
\]
Then \eqref{eq:lambda_isothermal} implies that $\lambda v_{H,j}=e^{2\alpha(x)}(g_{H,j}+g_{H,j+1})$. We obtain 
\[
g_{H,j}= \lambda w_{H,j} +(-1)^{H-j}g_{H,H}, \quad \hbox{for some } w_j \in S^{k-1}M\times S^{\ell -1}M.
\]
Thus
\[
\begin{split}
dA'u
=&
(-1)^{\ell-a}\bigg(\widetilde R_1g_{H,h}+\widetilde R_2g_{H-1,h-1}\bigg)
\\
=&(-1)^{\ell}\bigg(\widetilde R_1g_{H,H}+\widetilde R_2g_{H-1,H-1}\bigg)+\lambda w', \quad \hbox{ for some } w' \in S^{k-1}M\times S^{\ell-1}M
\end{split}
\]
and
\[
\begin{split}
Ad^su
=& \:\widetilde R_1 \frac{1}{(k+\ell)!} \sum^{H}_{j=0}(-1)^{\ell-H+j}A_j(\lambda w_{j,H} +(-1)^{H-j
}g_{H,H})
\\
+ & \:\widetilde R_2 \frac{1}{(k+\ell)!}\sum^{H-1}_{i=0}(-1)^{\ell-H+i+1}B_i(\lambda w_{i,H-1} +(-1)^{H-1-i
}g_{H-1,H-1}\big)
\\
=&(-1)^{\ell} \bigg(\widetilde R_1 g_{H,H}+ \widetilde R_2g_{H-1,H-1}\bigg)+\lambda w'',  \quad \hbox{ for some } w'' \in S^{k-1}M\times S^{\ell-1}M. 
\end{split}
\]
These identities imply
\[
A(d^su-d'u)=\lambda w, \quad w \in S^{k-1}M\times S^{\ell-1}M.
\]

For the case $\min\{k,\ell\} < H \leq \max\{k,\ell\}$, the proof is
similar and is omitted. Therefore have proved \eqref{eq:A(d^s-d')}.

\medskip\medskip

\noindent
Finally we prove equation \eqref{eq:Ad'=d'A}. We note that 
\begin{align*}
d'Au= \:(-1)^{\ell-a}\bigg(\widetilde R_1\big((\otimes^{h}dx^1) \otimes_s(\otimes^{k-h} dx^2)\big)\otimes \big((\otimes^{\ell-a}dx^1) \otimes_s(\otimes^{a} dx^2)\big)
\\
+ \:\widetilde R_2\big((\otimes^{h-1}dx^1) \otimes_s(\otimes^{k-h+1} dx^2)\big)\otimes \big((\otimes^{\ell-a}dx^1) \otimes_s(\otimes^{a} dx^2)\big)\bigg).
\end{align*}
Thus \eqref{eq:Ad'=d'A} holds since the previous equation coincides
with \eqref{eq:Ad'u}.

\bibliographystyle{abbrv}
\bibliography{biblio}

\end{document}